 \documentclass[10pt, reqno]{amsart}
 \usepackage{tikz}
 \usepackage{hyperref, mathtools}
 \usepackage[skip=4pt]{caption}
 \usepackage{amssymb,amsmath,graphicx,amsthm}
  \usetikzlibrary{positioning}
  \usepackage{tkz-euclide}
\usepackage{rotating}
\usepackage{color,xcolor}
\definecolor{darkred}{rgb}{1,0,0} 
\definecolor{darkgreen}{rgb}{0,0.8,0}
\definecolor{darkblue}{rgb}{0,0,1}

\hypersetup{colorlinks,
linkcolor=darkblue,
filecolor=darkgreen,
urlcolor=darkred,
citecolor=darkgreen}

 \usetikzlibrary{calc,intersections,through,backgrounds}

 \def\bt{\begin{theorem}}
 	\def\el{\end{lemma}}
 \def\bl{\begin{lemma}}
 	\def\et{\end{theorem}}
 \def\bp{\begin{proposition}}
 	\def\ep{\end{proposition}}
 \def\bd{\begin{definition}}
 	\def\ed{\end{definition}}
 \def\br{\begin{remark}}
 	\def\er{\end{remark}}

 \def\R{{\mathbb R}}

 \def\C{\mathbb C}
 \def\B{\mathbb B}

 \def\P{{\mathbb P}}

 \def\label#1{\label{#1}}

 \numberwithin{equation}{section}

 \theoremstyle{plain}

 \newtheorem{theorem}{Theorem}[section]

 \newtheorem{lemma}[theorem]{Lemma}
 
 \newtheorem{proposition}[theorem]{Proposition}

 \theoremstyle{definition}
 
 \newtheorem{definition}[theorem]{Definition}
 
 \theoremstyle{remark}

 \newtheorem{remark}[theorem]{Remark}

 \newtheoremstyle{named}{}{}{\itshape}{}{\bfseries}{.}{.5em}{\thmnote{#3 }#1}
\theoremstyle{named}

 \DeclareMathOperator{\Rre}{Re}

 \newcommand{\p}{\partial}


 \newcommand\restrict[1]{\raisebox{-.5ex}{$|$}_{#1}}

 \def\Re{\text{Re}}

 \begin{document}
 	
 	\title[Orthogonal testing families and holomorphic extension]{Orthogonal testing families and holomorphic extension from the sphere to the ball }      
\author{Luca Baracco}
\address{Dipartimento di Matematica Tullio Levi-Civita, Universit\`a di Padova, via Trieste 63, 35121 Padova, Italy}
\email{baracco@math.unipd.it}

\author{Martino Fassina}
\address{Department of Mathematics, University of Illinois, 1409 West Green
Street, Urbana, IL 61801, USA}
\email{fassina2@illinois.edu}
\thanks{The second author acknowledges support of NSF grant 13-61001.}

 \begin{abstract} 

Let $\mathbb{B}^2$ denote the open unit ball in $\mathbb{C}^2$, and let $p\in \mathbb{C}^2$\textbackslash$\overline{\mathbb{B}^2}$. We prove that if $f$ is an analytic function on the sphere $\partial\mathbb{B}^2$ that extends 
holomorphically in each variable separately and along each complex line through $p$, then $f$ is the trace of a holomorphic function in the ball.  

\end{abstract}
\subjclass[2010]{Primary 32V25, 
Secondary 32V20, 32V40}
\keywords{Analytic discs, holomorphic extension, testing families}
  	\maketitle
	
 	\section{Introduction and Main Theorem}
 	\label{s1}
It is a well-known fact in the theory of several complex variables that a function is holomorphic if and only if it is holomorphic in each variable separately. This result goes back to Hartogs \cite{H}. It is natural to consider a boundary version of Hartogs' theorem. The general problem is to take
a boundary function and ask if holomorphic extensions on vertical and horizontal
slices are enough to guarantee an extension which is holomorphic in both variables simultaneously. In \cite{Lw} Lawrence proved that vertical and horizontal slices are enough to detect the existence of holomorphic extension to the interior for functions defined on a small perturbation of the boundary of the unit ball $\B^2\subset\C^2$. However, the result is not true for the ball itself, for which additional conditions are needed. 

There is a vast literature on describing families of directions which suffice for testing analytic extension of a continuous function $f$ from the sphere to the ball. The first significant result was obtained by Stout \cite{S}, who
used as testing family all the straight lines. Reducing the testing family, Agranovsky and
Semenov \cite{AS} used the lines which meet an open subset of the ball, Rudin \cite{R} the lines tangent to a concentric subsphere, Baracco, Tumanov and Zampieri \cite{BTZ} the lines tangent to any strictly convex subset of $\B^2$. Among the many contributions to the problem we mention \cite{AV71, B12, D99, G12, Lw18} and \cite{T07}.

It is well known that the lines which meet a single point do not suffice. With additional hypotheses on the initial regularity of $f$ on $\p\B^2$ (namely, for $f$ analytic rather than just continuous) one can prove that the families of lines through the following sets of points do suffice: two interior points (Agranovsky \cite{A11}), one boundary point (Baracco \cite{B16}), two points in $\C^2\setminus\overline{\B^2}$ whose joining line is tangent to the sphere (Baracco and Pinton \cite{BP}). 

In \cite{B13} Baracco proved, for $f$ continuous on $\p\B^2$, that three non-aligned points in the ball suffice. The result was later improved by Globevnik \cite{Gl12}, who allowed the points to lie outside $\B^2$ provided that at least one of the joining lines meets the ball. At the end of his paper, Globevnik asked the following question: let $a,b,c\in\C^2$ be three points whose joining lines do not meet the ball. Do the lines through $a$, $b$ and $c$ constitute a testing family for holomorphic extension?

In this paper, we give a partial answer to Globevnik's question, under the assumption that $f$ is analytic on $\p\B^2$. Here is our main result.
\begin{theorem}\label{maintheorem}
 Let $f$ be an analytic function on the sphere $\partial\mathbb{B}^2$ which extends holomorphically in each variable separately and along each complex line through a point 
$p\in\mathbb{C}^2\setminus\overline{\mathbb{B}^2}$. Then $f$ extends holomorphically to $\mathbb{B}^2$.
\end{theorem}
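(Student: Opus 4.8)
The plan is to pass to moment (orthogonality) conditions, use the two coordinate families to reduce $f$ to a very rigid remainder, and then destroy that remainder with the lines through $p$. First recall the classical criterion: an analytic function $f$ on $\partial\B^2$ is the trace of a function holomorphic in $\B^2$ if and only if $\int_{\partial\B^2}f\,\alpha=0$ for every $\dbar$-closed $(2,1)$-form $\alpha$ smooth on a neighborhood of $\overline{\B^2}$; since $\B^2$ is strictly pseudoconvex every such $\alpha$ is $\dbar$-exact, so this amounts to $\int_{\partial\B^2}f\,\dbar u\we dz_1\we dz_2=0$ for all smooth $u$ near $\overline{\B^2}$ (and it suffices to take $u=\bar z^\beta g$ with $g$ holomorphic and $\beta\neq0$). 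Dually, if $\ell$ is a complex line with $D_\ell:=\ell\cap\B^2\neq\varnothing$ and $\gamma_\ell:=\ell\cap\partial\B^2$, then analyticity makes $f|_{\gamma_\ell}$ real-analytic on the circle $\gamma_\ell$, and ``$f|_{\gamma_\ell}$ extends holomorphically to $D_\ell$'' is equivalent to the vanishing of all strictly negative Laurent coefficients of $f|_{\gamma_\ell}$ about the center of $\gamma_\ell$, i.e.\ to $\int_{\gamma_\ell}f\,\omega=0$ for every holomorphic $1$-form $\omega$ near $\overline{D_\ell}$. So the theorem says that the circle conditions coming from the three families (lines $\{z_2=b\}$, lines $\{z_1=a\}$, lines through $p$) jointly imply all the sphere conditions; this is the sense in which the three families form an ``orthogonal testing family.''

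Next I would exploit the torus action $(z_1,z_2)\mapsto(e^{i\alpha}z_1,e^{i\beta}z_2)$, which preserves $\partial\B^2$ and each coordinate family, and decompose $f=\sum_{(s,t)\in\Z^2}f^{[s,t]}$ into its torus--Fourier modes. The circles of $\{z_2=b\}$ are concentric in the $z_1$-plane and together sweep out $\partial\B^2$, so holomorphic extendibility along them is exactly the statement that $f^{[s,t]}\equiv0$ whenever $s<0$; likewise $\{z_1=a\}$ forces $f^{[s,t]}\equiv0$ whenever $t<0$. Hence $f=\sum_{s,t\geq0}f^{[s,t]}$. In each weight space $L^2_{[s,t]}$ with $s,t\geq0$ the holomorphic traces form precisely the line $\C\,[z_1^sz_2^t]$, so writing $\Pi f$ for the orthogonal projection of $f$ onto the Hardy space and $g:=f-\Pi f$, one gets $g=\sum_{s,t\geq0}g^{[s,t]}$ with $g^{[s,t]}\perp z_1^sz_2^t$ (in particular $g$ is supported on the bihomogeneous harmonics $\H_{p,q}$ with $p\geq q\geq1$), and $f$ extends to $\B^2$ if and only if $g\equiv0$. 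Moreover $g$ still extends holomorphically along every line through $p$, because $\Pi f$ --- an analytic Hardy function, hence the trace of a function holomorphic on $\B^2$ and real-analytic up to $\overline{\B^2}$ --- extends along every line.

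It remains to prove $g\equiv0$ using only lines through $p$. Parametrize such a line by its direction $v\in\mathbb{P}^1$; because $p\notin\overline{\B^2}$ the directions whose line meets $\B^2$ form a nonempty open set, and for those $v$ the circle $\gamma_{\ell_v}$ has center $\zeta_0(v)=-\langle v,p\rangle/|v|^2$. Restricting a harmonic monomial $z_1^az_2^b\bar z_1^c\bar z_2^d$ to $\gamma_{\ell_v}$ and using $\bar\zeta=\bar\zeta_0+r(v)^2/(\zeta-\zeta_0)$ on that circle yields a Laurent expansion about $\zeta_0(v)$ whose principal part depends rationally --- hence, after the analytic reduction, honestly real-analytically --- on $v$; requiring this principal part to vanish for every admissible $v$ is an overdetermined system. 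I would close by induction on $s+t$, the base case being that $g^{[0,0]}$, a real-analytic function of $|z_1|^2$, restricts on each $\gamma_{\ell_v}$ to something with a pole at $\zeta_0(v)$ whose order and leading coefficient --- the latter nonvanishing for generic $v$, being proportional to a power of $\kappa(v)=r(v)^2\big(p_1\bar v_1+|v_1|^2\zeta_0(v)\big)\not\equiv0$ --- force $g^{[0,0]}$ to be constant and hence, being $\perp 1$, zero; one then strips off each layer and descends. The hypotheses are used precisely here: analyticity of $f$ makes $g$ real-analytic, so the principal parts above are finite expressions varying real-analytically (indeed algebraically) in $v$ and unique continuation applies on the open set of admissible directions; and $p\notin\overline{\B^2}$ ensures both that this set is nonempty and that the centers $\zeta_0(v)$ do not all collapse to the origin, so the conditions along lines through $p$ are genuine obstructions on $g$. (If moreover $|p_1|<1$ or $|p_2|<1$, one of the joining lines $\{z_2=p_2\}$, $\{z_1=p_1\}$ meets the ball and one is close to the situations of Globevnik and of Baracco--Pinton; the case $|p_1|,|p_2|\geq1$ is exactly the instance of Globevnik's question proved here.)

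The main obstacle is this last step. The first two steps are soft, since the coordinate families decouple completely under the torus action. But the lines through $p$ are ``generic'' lines, and restricting a monomial to $\gamma_{\ell_v}$ couples infinitely many of the $g^{[s,t]}$ simultaneously through the binomial expansion of $\big(\bar B(v)+\bar v_2r(v)^2(\zeta-\zeta_0)^{-1}\big)^k$; the crux is to organize the induction --- choosing which directions $v$ to test at each stage and controlling this coupling --- so that the $g^{[s,t]}$ come out one layer at a time. This is precisely the content beyond the two coordinate families, and it is where the analyticity of $f$ is indispensable.
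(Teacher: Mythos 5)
Your reduction via the torus action is fine as far as it goes: averaging over rotations shows that extendibility along all lines $\{z_2=b\}$ kills the Fourier modes with negative first weight, extendibility along $\{z_1=a\}$ kills those with negative second weight, and subtracting the Szeg\H{o} projection $\Pi f$ (which, for $f$ real-analytic, is holomorphic past $\overline{\B^2}$ and hence extends along every line) leaves a remainder $g$ that still satisfies the conditions along the lines through $p$. But the theorem is not yet proved at that point, and the step you defer --- showing that the one-parameter family of lines through a single exterior point forces $g\equiv 0$ --- is the entire content of the result. You acknowledge this yourself: the restriction of a single mode $g^{[s,t]}$ to a circle $\gamma_{\ell_v}$ contributes to infinitely many Laurent coefficients about the (noncentral) center $\zeta_0(v)$, and each vanishing condition couples infinitely many modes, so there is no triangular structure and no justified induction on $s+t$; the ``strip off each layer and descend'' scheme is asserted, not constructed, and even the base case ($g^{[0,0]}$ constant) rests on an unverified nonvanishing claim for a leading coefficient. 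This cannot be waved through: for merely continuous $f$ the lines through one point are known \emph{not} to suffice, so any correct argument must exploit analyticity through a concrete mechanism, and ``real-analytic dependence on $v$ plus unique continuation'' is not such a mechanism --- unique continuation in $v$ tells you nothing unless you have already shown the conditions vanish identically in $v$ force vanishing of $g$, which is precisely the open question.

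For comparison, the paper's proof supplies exactly the missing mechanism, and it is of a different nature: $f$ is lifted to a function $F$ on the projectivized conormal bundle $E\subset\C^2\times\P^1_\C$, which is maximally totally real, so analyticity of $f$ yields a holomorphic extension of $F$ to a full neighborhood of $E$; the three testing families become the lifted manifolds $M_{(1,0)_\infty}$, $M_{(0,1)_\infty}$, $M_p$, on whose regular parts $F$ is CR; Hanges--Tr\`eves propagation along the complex curves foliating $M_p^{reg}$ extends $F$ near $M_p^{reg}$; and a continuity-principle argument with an explicitly constructed family of analytic discs $B_t$ attached to $M^{reg}_{(1,0)_\infty}\cup M^{reg}_{(0,1)_\infty}$ (shrinking into the known neighborhood of $E$ as $t\to 1/|p|$) gets past the CR-singular point, after which $F$ is holomorphic near $\{0\}\times\P^1_\C$, hence constant in the fiber variable and descends to the desired extension. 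If you want to salvage your Fourier-analytic route, you would need a genuine substitute for this propagation-plus-continuity step, e.g.\ a proof that the coupled moment conditions along $\{\ell_v\}$ have only the trivial solution in the class of real-analytic remainders $g$ --- and that is an open-ended analytic problem, not a routine verification.
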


We will only prove the case $p=(p_1,p_2), |p_1|>1, |p_2|>1$, which falls into Globevnik's question, with two of the points being at infinity. We will not deal with the cases $|p_1|<1$ or $|p_2|<1$, which were already treated by Globevnik \cite{Gl12}.

For the proof of our theorem we employ techniques related to stationary discs in the sense of Lempert \cite{L} that have already been used in this context in \cite{BTZ,B13,B16}. We add a better understanding of the geometry of the space of lifts of stationary discs and the use of a continuity principle.

\section{Stationary Discs}
In this section we summarize some basic facts on stationary discs and we prove a technical lemma that will be used in the proof of Theorem \ref{maintheorem}. For more background information on analytic and stationary discs, we refer the reader to the original paper of Lempert \cite{L} and Tumanov's lecture notes \cite{T}.

Let $M$ be a smooth real manifold in $\C^n$ and let $T M$ denote its tangent bundle. For $p\in M$ recall the space $T_p^{1,0}M\subset T_p M\otimes\C$ of complex $(1,0)$-vectors defined as $$T_p^{1,0}M:=\big{\{}X\in T_pM\otimes\C\colon X=\sum a_j\,\partial/\partial z_j\big{\}}.$$

Let $T^*\C^n$ be the real cotangent bundle of $\C^n$. Since every $(1,0)$-form is uniquely determined by its real part, we represent $T^*\C^n$ as the space of $(1,0)$-forms on $\C^n$. More precisely, for $z\in\C^n$, we use the identification 
\begin{equation*}
T_z^*\C^n\simeq (T_z^{1,0}\C^n)^*
\end{equation*}
\begin{equation*}
\omega\xrightarrow{\sim} \Omega
\end{equation*}
where $\langle \omega, X\rangle=\Rre\,\langle \Omega,X\rangle$ for all $X\in T_z\C^n$. Let $T^*_{M}\C^n\subset T^*\C^n$ be the real {\em conormal bundle} of $M$. Using the representation of $T^*\C^n$ by $(1,0)$-forms, we define the fiber $(T^*_{M}\C^n)_p$ at $p\in M$ as 
\begin{eqnarray*} 
\left(T^*_{M}\C^n\right)_p:=\{ \omega\in T_p^*\C^n \colon \Re\, \omega\restrict {T_p M} =0 \}.
\end{eqnarray*}  
Note that if $r$ is a defining function for $M$ then the conormal bundle $T^*_{M}\C^n$ is generated by $\partial r=\sum \partial r/\partial z_j dz_j$.

Let $\Delta$ be the unit disc in $\mathbb{C}$. An {\em analytic disc} in a complex manifold $X$ is a holomorphic map $A:\Delta\rightarrow X$. 
We say that $A$ is {\em attached} to some set $M\subset X$ if $A$ is continuous in the closed disc $\overline{\Delta}$ and $A(\partial\Delta)\subset M$.

Let $D$ be a strictly pseudoconvex domain in $\C^n$. An analytic disc $A$ attached to $\p D$ is said to be {\em stationary} if there exists a map $\lambda\colon\partial\Delta\rightarrow\R_{> 0}$ such that the function $\tau\lambda(\tau)\partial r(A(\tau))$, defined for $\tau\in\partial\Delta$, extends to a function continuous in $\overline{\Delta}$ and holomorphic in $\Delta$. In other words, a disc is stationary if it admits a meromorphic ``lift" to a disc in the cotangent bundle attached to the conormal bundle.

Let $\B^n$ be the open unit ball in $\C^n$. It is immediate to verify that the conormal bundle of the $n$-sphere $\p\B^n$ is given by 
\begin{equation*}
T^*_{\p\B^n}\C^n=\{(z,\lambda \overline{z}), z\in\p\B^n,\, \lambda\in\R\}.
\end{equation*}
In this case, the stationary discs are precisely the straight ones, that is, the ones obtained by intersecting the ball with complex lines.

The following two propositions are well known.
\begin{proposition}
Let $A\colon \Delta\rightarrow X$ be a stationary disc and let $\varphi\colon\Delta\rightarrow\Delta$ be an automorphism of the unit disc. Then $A\circ\varphi\colon\Delta\rightarrow X$ is also stationary.
\end{proposition}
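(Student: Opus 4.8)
The plan is to exploit the explicit form of a disc automorphism. Write $\varphi(\tau)=e^{i\theta}\dfrac{\tau-a}{1-\bar a\tau}$ with $\theta\in\R$ and $|a|<1$, and note that $1-\bar a\tau$ is zero‑free on $\overline{\Delta}$. First observe that $B:=A\circ\varphi$ is again an analytic disc attached to $\partial D$: it is holomorphic on $\Delta$, continuous on $\overline{\Delta}$, and $B(\partial\Delta)=A(\varphi(\partial\Delta))=A(\partial\Delta)\subset\partial D$ since $\varphi$ preserves the unit circle. Let $\lambda\colon\partial\Delta\to\R_{>0}$ be the multiplier for $A$, so that the function defined on $\partial\Delta$ by $F(\sigma):=\sigma\,\lambda(\sigma)\,\partial r(A(\sigma))$ extends holomorphically to $\Delta$ and continuously to $\overline{\Delta}$; denote this extension again by $F$. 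Composing with $\varphi$, the map $\tau\mapsto F(\varphi(\tau))=\varphi(\tau)\,\lambda(\varphi(\tau))\,\partial r(B(\tau))$ is holomorphic on $\Delta$ and continuous on $\overline{\Delta}$.

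The next step is to produce the multiplier for $B$ by absorbing the Blaschke factor. Set $\mu(\tau):=|1-\bar a\tau|^2\,\lambda(\varphi(\tau))$ for $\tau\in\partial\Delta$; this is again a positive function, since $|1-\bar a\tau|\ge 1-|a|>0$. Consider the candidate lift
\[
G(\tau):=e^{-i\theta}\,(1-\bar a\tau)^2\,F(\varphi(\tau)),
\]
which is manifestly holomorphic on $\Delta$ and continuous on $\overline{\Delta}$, being a product of such functions. The key algebraic identity is that on $\partial\Delta$ we have $\bar\tau=1/\tau$, hence $|1-\bar a\tau|^2=(1-\bar a\tau)(1-a\bar\tau)=(1-\bar a\tau)\dfrac{\tau-a}{\tau}$, and therefore, for $\tau\in\partial\Delta$,
\[
e^{-i\theta}(1-\bar a\tau)^2\,\varphi(\tau)=e^{-i\theta}(1-\bar a\tau)^2\cdot e^{i\theta}\frac{\tau-a}{1-\bar a\tau}=(1-\bar a\tau)(\tau-a)=\tau\,|1-\bar a\tau|^2 .
\]
Substituting this into the definition of $G$ gives, for $\tau\in\partial\Delta$,
\[
G(\tau)=e^{-i\theta}(1-\bar a\tau)^2\,\varphi(\tau)\,\lambda(\varphi(\tau))\,\partial r(B(\tau))=\tau\,|1-\bar a\tau|^2\,\lambda(\varphi(\tau))\,\partial r(B(\tau))=\tau\,\mu(\tau)\,\partial r(B(\tau)).
\]
Thus $\tau\mapsto\tau\,\mu(\tau)\,\partial r(B(\tau))$ extends continuously to $\overline{\Delta}$ and holomorphically to $\Delta$, with the prescribed positive multiplier $\mu$, which is precisely the definition of $B=A\circ\varphi$ being stationary.

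There is essentially no serious obstacle here; the only point requiring care is the choice of the holomorphic correction factor $e^{-i\theta}(1-\bar a\tau)^2$, which is forced by the need to cancel the simple pole of $\tau/\varphi(\tau)$ at $\tau=a$ while keeping the new multiplier real and positive on $\partial\Delta$ — the relation $\overline{\varphi(\tau)}=1/\varphi(\tau)$ on $\partial\Delta$ is exactly what makes both requirements hold at once. An alternative, equally short route is to reduce to the case of rotations (where the statement is immediate, with the lift picking up a unimodular constant and $\lambda$ replaced by $\lambda\circ\varphi$) and a single Blaschke involution, using that the automorphisms form a group together with the associativity $A\circ(\varphi_1\circ\varphi_2)=(A\circ\varphi_1)\circ\varphi_2$.
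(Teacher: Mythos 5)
Your proof is correct and follows essentially the same route as the paper: write the automorphism explicitly, take the new multiplier $|1-\bar a\tau|^2\,\lambda\circ\varphi$ (which equals the paper's $|\tau-a|^2\,\lambda\circ\varphi$ on $\partial\Delta$), and absorb the Blaschke denominator by multiplying the composed lift by the holomorphic factor $(1-\bar a\tau)^2$ up to a unimodular constant. No changes needed.
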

\begin{proof}
Let $\varphi$ be given by $$\varphi(\tau)=\alpha\,\frac{\tau-a}{1-\tau\overline{a}}$$
for some $a,\alpha\in\C$ with $|\alpha|=1,|a|<1.$ The proposition is proved if we can find a map $\widetilde{\lambda}\colon\partial\Delta\rightarrow\R_{> 0}$ such that the function $\tau\widetilde{\lambda}(\tau)\partial r(A(\varphi(\tau)))$, defined for $\tau\in\partial\Delta$, extends to a function continuous in $\overline{\Delta}$ and holomorphic in $\Delta$. For $\tau\in \p\Delta$, let $$\widetilde{\lambda}(\tau):=|\tau-a|^2\lambda(\varphi(\tau)).$$ On $\p\Delta$ we have $$\tau\widetilde{\lambda}(\tau)\partial r(A(\varphi(\tau)))=(1-\tau\overline{a})^2\underbrace{\frac{(\tau-a)}{(1-\tau\overline{a})}\lambda(\varphi(\tau))A(\varphi(\tau))}_{H(\tau)}.$$
Since $A$ is stationary, $H(\tau)$ extends to a function in $\overline{\Delta}$ holomorphic in $\Delta$, and the same is true for $(1-\tau\overline{a})^2H(\tau)$.
\end{proof}

\begin{proposition}
Let $\lambda_1,\lambda_2\colon\partial\Delta\rightarrow\R_{> 0}$ be such that both $\tau\lambda_1(\tau)\partial r(A(\tau))$ and $\tau\lambda_2(\tau)\partial r(A(\tau))$ extend to functions continuous in $\overline{\Delta}$ and holomorphic in $\Delta$. Assume also $\lambda_1(1)=\lambda_2(1)$. Then $\lambda_1=\lambda_2$.
\end{proposition}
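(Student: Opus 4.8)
The plan is to work with the holomorphic extensions of the two lifts, show they are proportional via a meromorphic function that is real and positive on $\partial\Delta$, and conclude that such a function is necessarily constant.

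First I would let $F_i\colon\overline\Delta\to\C^n$ ($i=1,2$) be the map, continuous on $\overline\Delta$ and holomorphic on $\Delta$, with boundary values $F_i(\tau)=\tau\lambda_i(\tau)\partial r(A(\tau))$. Since $r$ is a defining function, $\partial r$ is nonvanishing on $M$, so $\partial r(A(\tau))\neq0$ for $\tau\in\partial\Delta$, and hence $F_1,F_2$ do not vanish on $\partial\Delta$. On $\partial\Delta$ one has $F_1=\mu F_2$ with $\mu:=\lambda_1/\lambda_2\colon\partial\Delta\to\R_{>0}$, so $F_1(\tau)$ and $F_2(\tau)$ are $\C$-proportional for every $\tau\in\partial\Delta$; consequently the holomorphic map $F_1\wedge F_2$ vanishes on $\partial\Delta$, hence identically on $\Delta$, and since $F_2\not\equiv0$ this yields $F_1=c\,F_2$ on $\Delta$ for a meromorphic function $c$, with $c$ extending continuously to $\partial\Delta$ and $c\restrict{\partial\Delta}=\mu>0$.

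Next I would rule out poles of $c$ in $\Delta$. A pole of $c$ at a point $a$ would force the lift $F_2$ to vanish at $a$, which does not occur for the non-degenerate stationary discs relevant here: for the ball, $\partial r(A(\tau))=\overline{A(\tau)}$ and $A(\tau)=p+\tau q$ with $\langle p,q\rangle=0$, $|p|^2+|q|^2=1$ and $q\neq0$, so on $\partial\Delta$ the lift is $\lambda_i(\tau)(\tau\overline p+\overline q)$, where $\tau\overline p+\overline q$ is a nowhere-vanishing affine map; thus $F_i$ is proportional to $\tau\overline p+\overline q$ on $\Delta$ with meromorphic ratio $\Lambda_i$ that agrees with $\lambda_i$ on $\partial\Delta$, and $\Lambda_i$ cannot have a pole because $F_i$ would then be unbounded near that point while $\tau\overline p+\overline q$ stays bounded away from $0$. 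Granting that $F_2$ is zero-free on $\overline\Delta$, for every $\zeta\in\overline\Delta$ one may pick an index $j$ with $F_2^{(j)}(\zeta)\neq0$ and write $c=F_1^{(j)}/F_2^{(j)}$ near $\zeta$; hence $c$ is holomorphic on $\Delta$ and continuous on $\overline\Delta$.

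Finally I would use the elementary fact that a function $c$ holomorphic on $\Delta$, continuous on $\overline\Delta$, and sending $\partial\Delta$ into $\R_{>0}$ is constant: the loop $c\restrict{\partial\Delta}$ lies in $\R_{>0}$, so its winding number about $0$ is zero, and the argument principle then gives that $c$ has no zeros in $\Delta$; thus $\log c$ is single-valued and holomorphic on $\Delta$, continuous on $\overline\Delta$, and $\Im\log c$ is harmonic on $\Delta$ with boundary values in $2\pi\Z$, hence identically $0$ on $\partial\Delta$ for a suitable branch, and so identically $0$ on $\Delta$ by the maximum principle; therefore $c$ is real-valued and holomorphic, hence a positive constant. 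Evaluating at $\tau=1$ and using $\lambda_1(1)=\lambda_2(1)$ forces $c\equiv1$, that is $\lambda_1=\lambda_2$. The step I expect to be the main obstacle is the exclusion of interior zeros of $F_2$ (equivalently of poles of $c$): unlike the rest of the argument, this cannot be done by soft function theory alone and must draw on the geometry of stationary discs — in the ball case via the explicit affine description above, and in general via the non-degeneracy of the disc.
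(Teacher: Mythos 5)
Your strategy (prove the two holomorphic lifts are proportional, $F_1=c\,F_2$ with $c$ meromorphic, and then show that a function holomorphic on $\Delta$, continuous on $\overline\Delta$ and positive on $\partial\Delta$ must be constant) is genuinely different from the paper's, and its soft parts are correct: the wedge argument, the continuity of $c$ up to $\partial\Delta$ with boundary values $\lambda_1/\lambda_2$, and the winding-number/logarithm argument at the end. The genuine gap is exactly the step you flag and then ``grant'': that $c$ has no poles in $\Delta$, equivalently that interior zeros of $F_2$ do not occur (or are matched by $F_1$). This is not a removable technicality: a meromorphic function with positive boundary values need not be constant, e.g.\ $c(\tau)=\tau+\tau^{-1}+3$ equals $2\cos\theta+3>0$ on $\partial\Delta$ but has a pole at $0$, so without excluding poles the final argument does not close. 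Moreover, the proposition is stated for a stationary disc attached to the boundary of a general strictly pseudoconvex domain $D$, and for that case you give no argument at all; even in the ball case your explicit computation only shows that the scalar ratios $\Lambda_i$ in $F_i=\Lambda_i\,(\tau\overline p+\overline q)$ are pole-free, i.e.\ holomorphic, not that $\Lambda_2$ (hence $F_2$) is zero-free in $\Delta$. In the ball that remaining piece is in fact soft, contrary to your closing remark: apply your own winding-number argument to $\Lambda_2$, or more simply note that $\Lambda_1-\Lambda_2$ is holomorphic with real boundary values, hence a real constant, which vanishes at $\tau=1$. But as written the step is missing, and for general $D$ nothing is provided.

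The paper's proof sidesteps the interior of the disc entirely, which is why it needs no such nonvanishing claim: it pairs the difference $(\lambda_1-\lambda_2)\partial r(A)$ with the boundary tangent vector $i\tau A'(\tau)$. Conormality of $\lambda_i\,\partial r(A)$ forces the real part of this holomorphic pairing to vanish on $\partial\Delta$, so the pairing is an imaginary constant, which is $0$ because $\lambda_1(1)=\lambda_2(1)$; then $(\lambda_1-\lambda_2)\langle\partial r(A),i\tau A'(\tau)\rangle\equiv0$ on $\partial\Delta$, and the factor $\langle\partial r(A),i\tau A'(\tau)\rangle$ is nonvanishing on $\partial\Delta$ by strict pseudoconvexity and the Hopf lemma. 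If you wish to salvage your route in the general setting, you would have to prove that the lift of a stationary disc does not vanish inside $\Delta$ (a nontrivial point of Lempert theory), or restructure the argument, as the paper does, so that only boundary nonvanishing is needed.
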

\begin{proof}
Since $\partial r$ generates the conormal bundle, we have $$\Rre\,\langle\partial r(A),\partial_{\theta}A(e^{i\theta})\rangle=0.$$
Therefore
\begin{equation}\label{zero}
\Rre\,\langle(\lambda_1-\lambda_2)\partial r(A),ie^{i\theta}A'(e^{i\theta})\rangle=0.
\end{equation}
Equation \eqref{zero} implies that the holomorphic function $\langle(\lambda_1-\lambda_2)\partial r(A),ie^{i\theta}A'(e^{i\theta})\rangle$ is constant, and therefore identically zero (since it vanishes at 1). Hence
$$(\lambda_1-\lambda_2)\langle \partial r(A),i\tau A'(\tau)\rangle\equiv 0.$$
By strong pseudoconvexity of $D$ and the Hopf Lemma, $\langle \partial r(A),i\tau A'(\tau)\rangle$ is nonvanishing on $\p\Delta$. Hence $\lambda_1=\lambda_2$.
\end{proof}

The discussion above shows that the lift of a stationary disc is unique up to multiplication by a scalar function. It is therefore natural to think of a lift as a geometric object in the projective space $\P T^*\C^n$. In the rest of the paper we will use $[\,,\,]$ to denote projective coordinates.

We now restrict our attention to $\C^2$. In particular, for the proof of Theorem \ref{maintheorem} we will need an explicit formula for the union of the lifts of the discs obtained by intersecting the unit ball $\mathbb{B}^2$ with the complex lines through a point. Let us fix some notation. For a given point $p\in \mathbb{C}^2\setminus\overline{\mathbb{B}^2}$, we consider the family of complex lines through $p$. For each $z\in\mathbb{B}^2$ let $A_z$ be the disc obtained by intersecting $\mathbb{B}^2$ with the line through $p$ and $z$. Each such disc $A_z$ is stationary. We denote by $A^*_z$ the corresponding lift in the (projectivized) cotangent bundle.
The next lemma gives a precise description of the set of all lifts $M_p:=\bigcup_{z\in\mathbb{B}^2} A^*_{z}$. 

\begin{lemma}\label{prop}
 For $p\in \mathbb{C}^2\setminus\overline{\mathbb{B}^2}$ the following holds:
\begin{equation*}\label{rialzata} 
M_p=\{ (z;[\overline{z}(z\cdot\overline{p} -1)+\overline{p} (1-|z|^2)])\in \B^2\times \P T^*\C^2  \}.
\end{equation*}
\end{lemma}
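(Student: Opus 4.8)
The plan is to compute the lift of an individual disc $A_z$ in closed form and then take the union over $z\in\B^2$. First I fix $z\in\B^2$ and let $L$ be the complex line through $p$ and $z$ (which are distinct, as $z\in\B^2$ and $p\notin\overline{\B^2}$). Let $q\in L$ be the point of $L$ nearest the origin, so $|q|\le|z|<1$, and choose $v\in\C^2$ with $L=q+\C v$, $q\cdot\overline v=0$ and $|v|^2=1-|q|^2$. Taking $A_z(\tau):=q+\tau v$, $\tau\in\overline\Delta$, gives an analytic disc with $A_z(\Delta)=L\cap\B^2$ and $A_z(\p\Delta)\subset\p\B^2$, because (using $q\cdot\overline v=0$) $|A_z(\tau)|^2=|q|^2+|\tau|^2|v|^2$ equals $1$ exactly when $|\tau|=1$. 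Since $\p r(w)=\overline w$ on the sphere, the restriction of $\tau\,\p r(A_z(\tau))$ to $\p\Delta$ is $\tau\overline q+\overline v$, already the boundary value of a holomorphic polynomial; so $A_z$ is stationary (with $\lambda\equiv1$), and its lift is the disc $\tau\mapsto\bigl(A_z(\tau);\,[\tau\overline q+\overline v]\bigr)$.

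Next I would rewrite this fiber at the point $z$ itself. Writing $z=A_z(\tau_0)=q+\tau_0v$ and conjugating gives $\overline q=\overline z-\overline{\tau_0}\,\overline v$, hence $\tau_0\overline q+\overline v=\tau_0\overline z+(1-|\tau_0|^2)\overline v$, so the lift of $A_z$ over $z$ is $[\tau_0\overline z+(1-|\tau_0|^2)\overline v]$. I then eliminate $q$, $v$, $\tau_0$ in favour of $z$ and $p$. Putting $v=s(p-z)$ with $s\in\C\setminus\{0\}$, the orthogonality $q\cdot\overline v=0$ together with $q=z-\tau_0v$ gives $\tau_0 s\,|p-z|^2=z\cdot\overline p-|z|^2$, while $|q|^2+|v|^2=1$ together with $|z|^2=|q|^2+|\tau_0|^2|v|^2$ gives $(1-|\tau_0|^2)\,|s|^2|p-z|^2=1-|z|^2$. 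Multiplying the representative $\tau_0\overline z+(1-|\tau_0|^2)\overline s\,(\overline p-\overline z)$ by the nonzero scalar $s\,|p-z|^2$ and collecting terms turns it into $(z\cdot\overline p-|z|^2)\overline z+(1-|z|^2)(\overline p-\overline z)=\overline z(z\cdot\overline p-1)+\overline p(1-|z|^2)$, which is the vector in the statement.

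To conclude, every $z\in\B^2$ contributes the point $\bigl(z;[\overline z(z\cdot\overline p-1)+\overline p(1-|z|^2)]\bigr)$ to $M_p$; conversely a point of $M_p$ over some $w\in\B^2$ lies on $A^*_z$ for a disc $A_z$ passing through $w$, but then that disc is the one through $p$ and $w$, namely the disc $A_w$, and the computation above (with $z$ replaced by $w$) identifies its fiber over $w$ as $[\overline w(w\cdot\overline p-1)+\overline p(1-|w|^2)]$. Finally the representing vector is nowhere zero on $\B^2$: if $w$ and $p$ are $\C$-linearly independent both coefficients would have to vanish, forcing $|w|=1$; and if $w=\gamma p$ the vector equals $(1-\overline\gamma)\overline p$, which vanishes only when $w=p\notin\overline{\B^2}$. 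Hence $M_p$ is exactly the asserted graph.

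I expect the only real work to be the elimination in the second paragraph; the one point that needs care is to substitute for $\overline q$ rather than for $\overline v$, which keeps the identity uniform in $\tau_0$ — in particular valid at the center $\tau_0=0$ of each disc, where solving for $\overline v$ would divide by $\overline{\tau_0}$ — and makes the normalization $|q|^2+|v|^2=1$ enter only through the single product $(1-|\tau_0|^2)|s|^2|p-z|^2$. The rest is routine bookkeeping.
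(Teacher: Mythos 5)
Your proof is correct and takes essentially the same route as the paper: both arguments write down an explicit stationary lift of the straight disc through $p$ and $z$ and evaluate its projectivized fiber at the parameter value lying over $z$, then simplify to the stated representative. The only difference is bookkeeping --- you parametrize the disc by the foot of the perpendicular $q$ and an orthogonal direction $v$, which makes stationarity immediate and avoids the paper's center--radius quantities $C_z,R_z$ and the relations between them, while your extra checks (that the disc over a point $w$ must be $A_w$, and that the representative vector never vanishes on $\B^2$) are details the paper leaves implicit.
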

\begin{proof}
 For $z\in \B^2$, the complex line through $z$ and $p$ consists of points of the form $z+\zeta(p-z),\,\zeta\in\C$. By intersecting with the unit ball, we find that the points of the disc $A_z$ are the ones for which $\zeta$ satisfies
\begin{equation*}\label{disco1}|\zeta|^2+2\Re \left( \zeta\frac{(p-z)\cdot \overline{z}}{|p-z|^2} \right) +\frac{|z|^2 -1}{|p-z|^2}\le 0 .\end{equation*}
Let $R_z:= \sqrt{\frac{|p|^2+|z|^2 +|p\cdot \overline{z} |^2 -|z|^2|p|^2 -2\Re ( p\cdot\overline{z})}{|z-p|^4} }$ and $C_z:= -\frac{z\cdot\overline{(p-z)}}{|p-z|^2}$. We can then parametrize $A_z$ 
over the unit disc by 
\begin{equation} \label{disco2}
 A_z(\tau)= z+(R_z\tau +C_z)(p-z),\quad \tau\in\Delta.
\end{equation}
Note that $R_z$ and $C_z$ satisfy the following relations:
\begin{equation}\label{rel1}
R_z^2=-\frac{|z|^2-1}{|p-z|^2}+\frac{\big{|}p\cdot\overline{z}-|z|^2\big{|}^2}{|p-z|^4}
\end{equation}
and
\begin{equation}\label{rel2}
-R^2_z+|C_z|^2=\frac{|z|^2-1}{|p-z|^2}\,.
\end{equation}
Moreover $A_z(-\frac{C_z}{R_z})=z$. From \eqref{disco2} we can see that the meromorphic lift attached to the conormal bundle is given by
$$ A^*_z (\tau)=\Big{(}A_z(\tau),\frac{\overline{z} \tau+(R_z+\overline{C_z} \tau)(\overline{p-z})}{\tau}\Big{)}.$$ 
Using \eqref{rel1} and \eqref{rel2} we obtain the following formula in the projectivized cotangent bundle: $$A^*_z\Big{(}-\frac{C_z}{R_z}\Big{)}=(z;[\overline{z}(z\cdot\overline{p} -1)+\overline{p} (1-|z|^2)]).$$
This concludes the proof.
\end{proof}

\begin{remark}\label{rmk}
 $M_p$ is a manifold of dimension $4$ foliated by complex curves. With a standard computation one can also see that $M_p$ is a CR manifold of dimension 1 at all points except for those that project over the complex line $\{z\in\C^2\colon
z\cdot\overline{p}=1\}$. We thus have a decomposition
$$ M_p= M_p^{reg} \cup M_p^{sing}, \text{ where }\,\, M_p^{sing} =\{ (z,[\overline{p}])\, \colon\, z\cdot\overline{p}=1 \}.$$
Note that the set of CR singular points of $M_p$ is a complex curve which intersects transversally each $A^*_z$.
\end{remark}

\begin{remark}\label{remarko}
A decomposition analogous to the one described in Remark \ref{rmk} holds even when $p$ is a point at infinity, that is, when we are considering all the lines parallel to a given direction. 
As an example, let us describe the case of the lines parallel to the $z_2$-direction. We denote by $(0,1)_\infty$ the corresponding point at infinity. In this situation, we can use $z_1 \in \Delta$ as a parameter:
for each $z_1$ we have the disc $A_{(z_1,0)}(\tau)=(z_1,\sqrt{1-|z_1|^2}\tau)$, which lifts to 
\begin{equation*}
\begin{split}
A^*_{(z_1,0)}((\tau)&=\big{(}(z_1,\sqrt{1-|z_1|^2}\tau);[\overline{z_1}\tau,\sqrt{1-|z_1|^2}]\big{)}\\
&=\big{(}(z_1,\sqrt{1-|z_1|^2}\tau);[\overline{z_1}\tau\sqrt{1-|z_1|^2},1-|z_1|^2]\big{)}.
\end{split}
\end{equation*}
We conclude that
 \begin{equation*}\label{parallele}
M_{(0,1)_\infty}=\{ \big{(}(z_1,z_2);[\overline{z_1}z_2,1-|z_1|^2]\big{)}\in \B^2\times \P T^*\C^2\}. 
\end{equation*} 
Note that $M_{(0,1)_\infty}$ is a CR manifold of CR dimension 1 at all the points for which $z_1\neq 0$. As before, we have a decomposition
$$ M_{(0,1)_\infty}= M_{(0,1)_\infty}^{reg} \cup M_{(0,1)_\infty}^{sing}, \text{ where \,} M_{(0,1)_\infty}^{sing}=\{\big{(}(z_1,0);[0,1]\big{)}\,\colon  z_1\in \Delta \}.$$ 
Similar formulas hold for the point at infinity $(1,0)_\infty$, which corresponds to the lines parallel to the $z_1$-direction. In particular, one can check that
\begin{equation*}
M_{(1,0)_\infty} =\{ \big{(}(z_1,z_2);[1-|z_2|^2, z_1 \overline{z_2}] \big{)}\in \B^2\times \P T^*\C^2\}. 
\end{equation*}
\end{remark}

\section{Proof of Theorem \ref{maintheorem}}

\begin{proof}

Let $E:=\mathbb{P}T^*_{\p\B^2}\C^2$. The function $f$ lifts naturally to a real analytic function $F:E\rightarrow \C$. Note that $E$ is maximally totally real in $\C^2\times \P^1_\C$, hence $F$ extends holomorphically to a neighborhood of $E$. From now on, we denote by $\zeta$ the projective coordinate.
By the hypotheses on the holomorphic extendibility of $f$, it is possible to lift $f$ to a function defined on the manifolds 
$M_{(1,0)_\infty}$, $M_{(0,1)_\infty}$ and $M_p$. Note in fact that the lifts $A_z^*$ in these three families do not intersect outside of $E$. Our goal is now to show that $F$ extends to a holomorphic function in $\B^2\times \P^1_{\C}$, and therefore $F$ is constant in $\zeta$. We can then conclude that $F$ projects down to a holomorphic function in $\B^2$ that extends $f$ holomorphically to the ball. 

We start by observing that the function $F$ is CR on the regular part of the three manifolds $M_{(1,0)_\infty}$, $M_{(0,1)_\infty}$ and $M_p$. Since $M_p^{reg}$ is foliated by complex curves and each curve has some points where $F$ extends holomorphically to a full neighborhood, we can apply the propagation theorem of Hanges and Treves \cite{HT} to conclude that $F$ extends holomorphically to a neighborhood of $M_p^{reg}$. We will focus on the disc $A_0^*$ in $M_p$, that is, the lift of the disc through $p$ and the origin. Figure \ref{fig1} illustrates the situation. There, the circle represents the projectivized conormal bundle $E$, and the diameter is $A_0^*$. The shaded regions correspond to the neighborhoods where we know $F$ extends holomorphically.

\begin{figure}[h]
\begin{turn}{-45}
\centering
 \begin{tikzpicture}[scale=0.8]
                        
\draw[black,thick] circle (3cm);
\draw[thin,dashed] circle (2.6cm);
\draw[thin,dashed] circle (3.4cm);
\draw [line width=0.7mm, black ] (0,-3) -- (0,3);

\coordinate  (M) at (30:1.5);
\coordinate (P) at (150:1.5);
\coordinate (L) at (30:2);
\coordinate (Q) at (150:2);
\draw[thick,dashed] [shorten >=-0.5cm,shorten <=-0.5cm]    (M) -- (P);
\draw[thick]    (M) -- (P);

\node[rotate=45] at (2.7,-1.7) {$E$};
\node[rotate=45] at (0.25,-0.4) {{\bf $A_0^*$}};
\node[rotate=45] at (-1.0,0.2) {{\bf $M_p^{sing}$}};

\path [draw=none,fill=gray, fill opacity = 0.1,even odd rule] (0,0) circle (3.4) (0,0) circle (2.6);

 \tkzDefPoint(30:1.5){R}  \tkzDefPoint(0,3){B}
  \tkzDefPoint(150:1.5){T}  \tkzDefPoint(0,-3){D}
  \tkzInterLL(D,B)(R,T) \tkzGetPoint{E}
\draw[fill] (E) circle [radius=0.1] ; 

\draw [thin,dashed, fill=gray, fill opacity = 0.1] (0,-3) to[out=55,in=-55] (E) to[out=235,in=-235] (0,-3);
\draw [thin,dashed, fill=gray, fill opacity = 0.1] (E) to[out=60,in=-60] (0,3) to[out=240,in=-240] (E);
\end{tikzpicture}
\end{turn}
\caption{} \label{fig1}
\end{figure}

The next step is to achieve holomorphic extension at the point of CR singularity (shown in Figure \ref{fig1}). To this end, we construct a continuous family $\{B_t\}$ of analytic discs in $\P T^*\C^2$  attached to $M_{(1,0)_\infty}^{reg}\cup\, \,M_{(0,1)_\infty}^{reg}$ such that: 
\begin{itemize}
\item for some value of $t$, the center of $B_t$ is at the point of CR singularity;
\item for some value of $t$, the disc $B_t$ is contained in the neighborhood of $E$ where $F$ is holomorphic. 
\end{itemize} 
Note that $F$ is holomorphic on the boundary of $B_t$ for all $t$, since the discs are attached to $M^{reg}_{(1,0)_{\infty}}\cup\,\, M^{reg}_{(0,1)_{\infty}}$. Hence, assuming that we have such a family of discs, we can apply the continuity principle to conclude that $F$ extends holomorphically in a full neighborhood of $A_0^*$. We now consider the (continuous) family of lines through $0$. Again by the continuity principle, $F$ extends holomorphically along the lift of each disc through $0$. Consequently, $F$ is holomorphic in a neighborhood of $\{0\}\times \P^1_{\C}$, which is what we wanted to prove.

The rest of the proof is entirely devoted to constructing a family of discs $\{B_t\}$ with the properties described above.
We start by recalling from Remark \ref{remarko} the formulas:
\begin{equation} \label{par1}
M_{(1,0)_\infty} =\{ \big{(}(z_1,z_2);[1-|z_2|^2, z_1 \overline{z_2}] \big{)}\in \B^2\times \P T^*\C^2\} 
\end{equation}
\begin{equation}\label{par21}
M_{(0,1)_\infty}=\{ \big{(}(z_1,z_2) ;[z_2\overline{z_1}, 1-|z_1|^2] \big{)} \in \B^2\times \P T^*\C^2 \}.
\end{equation}
The set $E$, which is the only part shared by these two manifolds, is given by 
\begin{equation*}
\big{(}re^{i\eta_1},\sqrt{1-r^2}e^{i\eta _2};[re^{-i\eta_1},\sqrt{1-r^2} e^{-i\eta_2}]\big{)} \,\,\, 0\le r \le 1,\, \,\,0\le \eta_1,\eta_2\le 2\pi .
\end{equation*}
Dividing out by the last term, we now introduce a complex coordinate on the projective component. For $r\neq 1$ we then have a parametrization of $E$ given by
\begin{equation}\label{par2}  \bigg{(}re^{i\eta_1},\sqrt{1-r^2}e^{i\eta _2},\frac{r}{\sqrt{1-r^2}}e^{i(\eta_2-\eta_1)}\bigg{)}\,\,\,\, \,0< r < 1,\, \,\,\,0\le \eta_1,\eta_2\le 2\pi .\end{equation}
Equations \eqref{par2} and \eqref{par1} imply that
\begin{equation}\label{primoman}
 \bigg{(}r\rho_1 e^{i\eta_1},\sqrt{1-r^2}e^{i\eta_2}, \frac{r^2 \sqrt{1-r^2}e^{i\eta_2}}{r\rho_1 e^{i\eta_1}(1-r^2)}\bigg{)} \,\,\,\,\, 0<r,\rho_1<1, \,\,\,\,\eta_1,\eta_2 \in \R
\end{equation}
is a parametrization for (almost) all $M^{reg}_{(1,0)_\infty}$. Analogously, from \eqref{par2} and \eqref{par21},
\begin{equation}\label{secondoman}
 \bigg{(}r e^{i\eta_1},\sqrt{1-r^2}\rho_2 e^{i\eta_2}, \frac{r^2 \sqrt{1-r^2}\rho_2 e^{i\eta_2}}{r e^{i\eta_1}(1-r^2)}\bigg{)} \,\,\,\,\, 0<r,\rho_2<1,\,\,\,\, \eta_1,\eta_2 \in \R
\end{equation}
is a parametrization for $M^{reg}_{(0,1)_\infty}$.
Equations \eqref{primoman} and \eqref{secondoman} together give the parametrization
\begin{equation} \label{globale}
\phi(r,\rho_1,\eta_1,\rho_2,\eta_2)= \bigg{(}r\rho_1 e^{i\eta_1},\sqrt{1-r^2}\rho_2e^{i\eta_2}, \frac{r^2 \sqrt{1-r^2}\rho_2e^{i\eta_2}}{r\rho_1 e^{i\eta_1}(1-r^2)}\bigg{)}
\end{equation}
\begin{equation*}
 0<r,\rho_1,\rho_2<1,\,\,\,\, \eta_1,\eta_2 \in \R.
 \end{equation*}
Note that  $\phi(r,1,\eta_1,\rho_2,\eta_2)\in M_{(0,1)_\infty}$ and $\phi(r,\rho_1,\eta_1,1,\eta_2)\in M_{(1,0)_\infty}$.

We now look for functions
$\rho_1,\rho_2,\eta_1,\eta_2 : \partial\Delta\rightarrow \R$ such that 
\begin{eqnarray}\label{condizionii}
\begin{cases}
\phi\big{(}r,\rho_1(e^{i\theta}),\rho_2(e^{i\theta}),\eta_1(e^{i\theta}),\eta_2(e^{i\theta})\big{)} \text{ extends holomorphically to $\Delta$,}\\
\text{ for each $\theta$, $\rho_1(e^{i\theta})=1$ or $\rho_2(e^{i\theta})=1$  } \\
\end{cases}
\end{eqnarray}
To satisfy the first condition of \eqref{condizionii}, the function $\rho_1(e^{i\theta})e^{i\eta_1(e^{i\theta})}$ has to extend holomorphically. This happens if and only if 
\begin{eqnarray} \label{bishop}
\eta_1(e^{i\theta})= T_1 \log(\rho_1(e^{i\theta})) + \psi_1.
\end{eqnarray}
Analogously, looking at the second component, we obtain the condition
\begin{eqnarray}\label{bishop2}
\eta_2(e^{i\theta})= T_1 \log(\rho_2(e^{i\theta})) + \psi_2.
\end{eqnarray}
Here $\psi_1$ and $\psi_2$ are constants and $T_1$ is the Hilbert transform on the unit disc normalized by the condition $T_1 u (1)=0$.
Note that the third component of $\phi$ is automatically holomorphic when the first two components are holomorphic and never zero. Let now $p=(p_1,p_2)$, and let $t\in \big{[}\frac{1}{|p|^2},\frac{1}{|p|}\big{)}$. Then $tp=(tp_1,tp_2) \in \B^2$. Lemma \ref{prop} implies that the point  $Q_t=\big{(}(tp_1,tp_2);[\overline{p} (t+1-2t^2|p|^2)]\big{)}$ is the only point in $M_p$ such that $\pi (Q_t)=tp$, where $\pi$ is the natural projection from the cotangent bundle. We now look for a family of analytic discs $\{B_t\}$ (see Figure \ref{fig2}) such that 
\begin{eqnarray}\label{condizioni}
\begin{cases}
B_t(\partial\Delta )\subset M_{(1,0)_\infty}^{reg}\cup\,\, M_{(0,1)_\infty}^{reg}  \\
  B_t(0)=Q_t \,\,\,\, \forall \ t.
  \end{cases}
\end{eqnarray}

\begin{figure}[h]
 		\centering
 		\begin{tikzpicture}[scale=0.6]
 		\draw (0,0) parabola (-1,6);
 		\draw (3,3) parabola (2,9);
 		\draw (-1,-1) -- (4,4);
 		\draw (0,0) parabola (6,-1);
 		\draw (3,3) parabola (9,2);
 		\draw [line width=0.7mm, black ](1,1)..controls (1.4,3.5) and (1.6,3.5) .. (2,2);
 		\draw [line width=0.7mm, black ](1,1)..controls (3.5,1.4) and (3.5,1.6) .. (2,2);
\node at (7,0) {$M^{reg}_{(1,0)_{\infty}}$};
\node at (0,7) {$M^{reg}_{(0,1)_{\infty}}$};
\node at (1.6,3.5) {$B_t$};
\node at (4.2,4) {$E$};
 		\end{tikzpicture}
 		\caption{}\label{fig2}
 \end{figure}
 
Let $\rho_j, \eta_j$ for $j=1,2$ be solutions of \eqref{bishop} and \eqref{bishop2}. We want to determine conditions on the $\rho_j$ and $\eta_j$ such that 
\eqref{condizioni} holds. Looking at the second equation in \eqref{condizioni}, we let $\alpha_j:=\widetilde{\rho_j e^{i\eta_j}}(0)$ for $j=1,2$, where $\,\widetilde{\cdot}\,$ denotes the holomorphic extension to the unit disc.
The $\alpha_j$ must satisfy:
\begin{eqnarray} \label{sistema}
\begin{cases}
r\alpha_1=tp_1 \\
\sqrt{1-r^2}\alpha_2=tp_2 \\
\frac{r\alpha_2}{\sqrt{1-r^2}\alpha_1}=\frac{\overline{p_1}}{\overline{p_2}}.
\end{cases}
\end{eqnarray}
The solution to \eqref{sistema} is given by
\begin{eqnarray*}\label{soluzioni}
\begin{cases}
r=\frac{|p_1|}{|p|} \\
\alpha_1=t|p| \frac{p_1}{|p_1|} \\
\alpha_2=t|p|\frac{p_2}{|p_2|}.
\end{cases}
\end{eqnarray*}
Let $\big{\{}\rho^t_j:\partial\Delta\rightarrow (0,1],\, t\in \big{[}\frac{1}{|p|^2},\frac{1}{|p|}\big{)}\big{\}}$ for $j=1,2$ be two continuous families of smooth functions such that, for all $t$, the following conditions are satisfied:
\begin{eqnarray*} \label{tcomponenti}
\begin{cases}
\rho^t_1(e^{i\theta})=1\text{ for } 0\le \theta\le \pi  \\
\rho^t_2(e^{i\theta})=1\text{ for }\pi \le \theta\le 2\pi \\
\frac{1}{2\pi}\int^{2\pi}_0 \log(\rho^t_j(e^{i\theta}))d\theta=\log(t|p|)\ j=1,2.
\end{cases}
\end{eqnarray*}
Moreover, for each $t$, let $\psi_j^t$ for $j=1,2$ be constants such that
\begin{eqnarray*} \label{offset}
\psi_j^t+\frac{1}{2\pi}\int^{2\pi}_0 T_1\log(\rho^t_j(e^{i\theta})) \,d\theta =\arg (p_j).
\end{eqnarray*}
Now choose two families of functions $\big{\{}\eta_j^t, \,t\in \big{[}\frac{1}{|p|^2},\frac{1}{|p|}\big{)}\big{\}}$ such that \eqref{bishop} and \eqref{bishop2} are both satisfied for each $t$, with $\rho_1, \rho_2, \psi_1,\psi_2$ replaced by $\rho^t_1, \rho^t_2, \psi^t_1,\psi^t_2$. We thus obtain the family of discs $B_t=\phi(r,\rho_1^t,\eta_1^t,\rho_2^t,\eta_2^t)$ satisfying \eqref{condizioni}.

Note that, for $t \to \frac{1}{|p|}$, the disc $B_t$ shrinks to the point $(p,\frac{\overline{p_1}}{\overline{p_2}})$. Therefore, for values of $t$ close to $\frac{1}{|p|}$, the disc $B_t$ is contained in the neighborhood of $E$ where $F$ is holomorphic.

\end{proof}

\begin{remark}
The hypothesis of orthogonality of the testing families was used in the construction of the discs $\{B_t\}$. With that assumption, it was possible to attach the discs to $M_{(1,0)_\infty}^{reg}\cup\,\, M_{(0,1)_\infty}^{reg}$ by elementary techniques.
\end{remark}

 								\bibliographystyle{alpha}

\begin{thebibliography}{NRSW89}
 \bibitem[AV71]{AV71} Mark Agranovsky and R. Val'sky, Maximality of invariant algebras of functions, {\em Sibirsk. Mat. \u{Z}.} {\bf 12} (1971), 3--12. 
  \bibitem[AS91]{AS} Mark Agranovsky and Alexander M. Semenov, Boundary analogues of the Hartogs theorem, {\em Sibirsk. Mat. \u{Z}.} {\bf 12} (1991), no. 1, 168--170, translation in {\em Siberian Math. J.} {\bf 32} (1991), no. 1, 137--139. 	
\bibitem[A11]{A11} Mark Agranovsky, Analog of a theorem of Forelli for boundary values of holomorphic functions on the unit ball of $\mathbb{C}^n$, {\em J. Anal. Math.}, {\bf 113} (2011), 293--304.		
\bibitem[BTZ07]{BTZ} Luca Baracco, Alexander Tumanov and Giuseppe Zampieri, Extremal discs and holomorphic extension from convex hypersurfaces, {\em  Ark. Mat.} {\bf 45} (2007), no. 1, 1--13.
\bibitem[B12]{B12} Luca Baracco, Holomorphic extension from the sphere to the ball, {\em J. Math. Anal. Appl.} {\bf 388} (2012), no. 2, 760--762.							
\bibitem[B13]{B13} Luca Baracco, Separate holomorphic extension along lines and holomorphic extension from the sphere to the ball, {\em Amer. J. Math.} {\bf 135} (2013), no. 2, 493--497.
 \bibitem[B16]{B16} Luca Baracco, Holomorphic extension from a convex hypersurface, {\em Asian J. Math.} {\bf 20} (2016), no. 2, 263--266.
\bibitem[BP18]{BP} Luca Baracco and Stefano Pinton, Testing families of complex lines for the unit ball, {\em J. Math. Anal. Appl.} {\bf 458} (2018), no. 2, 1449--1455.
\bibitem[D99]{D99} Tien-Cuong Dinh, Conjecture de Globevnik-Stout et th\'eor\`eme de Morera pour une cha\^ine holomorphe, {\em Ann. Fac. Sci. Touluse Math. (6)} {\bf 8} (1999), no. 2, 235--257.
\bibitem[G12]{G12} Josip Globevnik, Small families of complex lines for testing holomorphic extendibility, {\em Amer. J. Math.} {\bf 134} (2012), no. 6, 1473--1490. 
\bibitem[G12a]{Gl12} Josip Globevnik, Meromorphic extensions from small families of circles and holomorphic extensions from spheres, {\em Trans. Amer. Math. Soc.} {\bf 364} (2012), no. 11, 5857--5880. 
\bibitem[HT83]{HT} Nicholas Hanges and Fran\c{c}ois Tr\`eves, Propagation of holomorphic extendability of CR functions, {\em Math. Ann.} {\bf 263} (1983), no. 2, 157--177. 
\bibitem[H06]{H} Fritz Hartogs, Zur Theorie der analytischen Funktionen mehrerer unabh\"angiger Ver\"anderlichen, insbesondere \"uber die Darstellung derselben durch Reihen, welche nach Potenzen einer Ver\"anderlichen fortschreiten, {\em Math. Ann.} {\bf 62} (1906), no. 1, 1--88.
\bibitem[L07]{Lw} Mark G. Lawrence, Hartog's separate analyticity theorem for CR functions, {\em Internat. J. Math.} {\bf 18} (2007), no. 3, 219--229.
\bibitem[L18]{Lw18} Mark G. Lawrence, The $L^p$ CR Hartogs separate analyticity theorem for convex domains, {\em Math. Z.} {\bf 288} (2018), no. 1-2, 401--414. 
\bibitem[Le81]{L} L\'aszl\'o Lempert, La m\'etrique de Kobayashi et la repr\'esentation des domaines sur la boule, {\em Bull. Soc. Math. France} {\bf 109} (1981), no. 4, 427--474.
\bibitem[R80]{R} Walter Rudin, Function theory in the unit ball of $\C^n$, Grundlehren Math. Wiss., vol. 241, Springer-Verlag, New York, 1980. 
\bibitem[S77]{S} Edgar Lee Stout, The boundary values of holomorphic functions of several complex variables, {\em Duke Math J.} {\bf 44} (1977), no. 1, 105--108.
\bibitem[T04]{T} Alexander Tumanov, Extremal discs and the geometry of CR manifold. Real methods in complex and CR geometry, 191--212,
Lecture Notes in Math., 1848, Springer, Berlin, 2004. 
\bibitem[T07]{T07} Alexander Tumanov, Testing analyticity on circles, {\em Amer. J. Math.} {\bf 129} (2007), no. 3, 785--790.

 							\end{thebibliography}

 							\end{document}